\documentclass[11pt]{article}
\usepackage{amsmath,amsfonts,amsthm,amssymb,mathtools}
\usepackage[linesnumbered,ruled]{algorithm2e}

\newcommand{\ceil}[1]{\lceil #1 \rceil}
\newtheorem{lem}{Lemma}

\newcommand{\floor}[1]{\lfloor #1 \rfloor}

\begin{document}

\title{Calculating the floor of $y^{1/m}$}

\author{Alexandros V. Gerbessiotis\thanks{CS Department, New Jersey Institute of Technology, 
Newark, NJ 07102, USA. Email: alexg@njit.edu}
}
\maketitle
\thispagestyle{empty}


\begin{abstract}
We present two algorithms based on the Newton-Raphson method
to calculate $\lfloor y^{1/m} \rfloor$ for natural integer numbers
$y>2$ and $m >1$. One could use such an algorithm to establish
whether $y$ is an integer power of an integer in number theory
problems, even though binary search methods are traditionally
considered simpler to implement.
\end{abstract}


\section{Introduction}

In several number theory algorithms we would like to determine
whether natural number $y$ is a perfect power that is, there
exist two natural numbers $x$ and $m>1$ such that $y= x^m$.
The primality testing algorithms of Miller \cite{M76} \cite{R80} and
AKS primality testing \cite{AKS} utilize such an 
identification method in the introductory steps of those
algorithms.  
The traditional approach to find $x,m$ is to use binary search.
For $y=x^m $ and $m>1$ implies $y=x^m \geq 2^m$ and therefore
$m \leq \lg{y}$. Therefore one tests for each integer  $m$ 
between integer two and the integer closer to $\lg{y}$ 
(or the floor of it) whether there exists an integer
$x$ such that  $y=x^m$ by performing a  binary search of $x$ 
on the integer interval between two and $y$. The exponentiation
algorithm with  repeated doubling can calculate $x^m$ 
efficiently within  the efficiency requirements of those 
two algorithms.
Furthermore, one need  only search for prime numbers $m$ 
in the given range and by doing so one can  further optimize 
the running time of the approach. 
In general the running time for perfect power
identification contributes low order terms to the overall
running time either of Miller's primality testing algorithms,
or the AKS algorithm.
The binary search-based method is not computationally efficient 
in the bit model of computation,
where bit-based arithmetic computations are used to express the
arithmetic complexity of an algorithm. Yet it is quite practical.

It is known that other methods can be used to solve the perfect
power identification problem as implied in \cite{BS} (page 59).
For example, Newton-Raphson based
methods have been used to determine the $m$-th root $\sqrt[m]{y}$
of an arbitrary real number $y$.
We utilize such a method to calculate $\floor{y^{1/m}}$.
In fact we present two different algorithms: they differ only on
the way they derive an integer-based version of Newton-Raphson.
Having computed $Y=\floor{y^{1/m}}$ one needs only test whether
$Y^m = y$ to determine not only whether $y$ is a perfect power but
calculate $x=Y$.

Such techniques using Newton-Raphson for integers problems are
not new e.g. see \cite{BS}. Our intent is just to provide 
an archival reference to such methods rather than making 
claims on their novelty.

\section{First approach}

The first approach is a quite straightforward adaptation
of a real number Newton-Raphson method for calculating
    $  y^{\frac{1}{m}}  $.
It is modified into  an integer-based Newton-Raphson method 
for calculating
    $ \left\lfloor y^{\frac{1}{m}} \right\rfloor $.
We call this approach Algorithm~\ref{F1}.
It is subsequently modified and refined into what 
we call Algorithm~\ref{F2}.

\begin{lem}
Given a natural integer number $y>2$ and
a natural integer number $m>1$,
Algorithm~\ref{F1} 
determines a natural integer number $x$ 
such that
\[
 x  =    \left\lfloor y^{\frac{1}{m}} \right\rfloor .
\]
\end{lem}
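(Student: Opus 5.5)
Since Algorithm~\ref{F1} is not reproduced above, I assume it is the standard integer Newton--Raphson iteration. It starts from some $x_0 \ge \floor{y^{1/m}}$, for instance $x_0 = y$ or $x_0 = 2^{\ceil{(\lfloor \lg y\rfloor+1)/m}}$. It then repeatedly computes
\[
x_{k+1} = \left\lfloor \frac{(m-1)x_k + \floor{y/x_k^{m-1}}}{m} \right\rfloor ,
\]
and stops as soon as $x_{k+1} \ge x_k$, returning $x_k$. Write $r = y^{1/m}$ for the real root and $s=\floor{r}$. The plan is to establish three invariants: (i) every iterate satisfies $x_k \ge s$; (ii) while $x_k > s$ the sequence strictly decreases; (iii) if $x_k = s$ then the next iterate satisfies $x_{k+1} \ge x_k$. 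Together these give both termination and correctness.

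For (i), I first remove the nested floors using $\floor{\floor{a}/m} = \floor{a/m}$ for integer $m$, so that $x_{k+1} = \floor{((m-1)x_k + y/x_k^{m-1})/m}$. By the AM--GM inequality applied to $m-1$ copies of $x_k$ together with $y/x_k^{m-1}$, the real quantity inside the floor is at least $(x_k^{m-1}\cdot y/x_k^{m-1})^{1/m} = r$. Taking floors gives $x_{k+1} \ge s$. The initial value must also satisfy $x_0\ge s$, which follows from the choice of $x_0$ (for example $y \ge y^{1/m}$).

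For (ii), suppose $x_k > s$. Since $x_k$ is an integer, $x_k \ge s+1 > r$, so $x_k^m > y$. Then $y/x_k^{m-1} < x_k$, which makes $(m-1)x_k + \floor{y/x_k^{m-1}} < m x_k$, and hence $x_{k+1} < x_k$. For (iii), if $x_k = s$ then $s^m \le y$ gives $\floor{y/s^{m-1}} \ge s$, so $x_{k+1} \ge s = x_k$. It follows that the stopping condition $x_{k+1}\ge x_k$ cannot fire while $x_k>s$, and it must fire once $x_k=s$. Because the sequence consists of integers that strictly decrease and are bounded below by $s\ge 1$, it reaches $s$ after finitely many steps, and the returned value is exactly $s = \floor{y^{1/m}}$.

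The main obstacle is matching this argument to the exact form of Algorithm~\ref{F1}: its precise update rule, its initial guess, and whether it tests $x_{k+1}\ge x_k$ or $x_k^m\le y$. If the algorithm uses a ceiling, a different grouping of floors, or checks $x^m > y$ instead, then the invariant in (i) and the stopping argument need small adjustments. The core tools remain the same: AM--GM for the lower bound and $x_k^m>y$ for the strict descent. A secondary point is to check edge cases such as $s=1$ (for example $y=3$, $m$ large), where $x_k^{m-1}$ may exceed $y$ so that $\floor{y/x_k^{m-1}}=0$. The argument above still holds there, but it must be verified that the algorithm avoids division by zero and never produces $x_{k+1}=0$ before stopping.
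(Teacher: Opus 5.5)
Your argument is correct, and it applies to Algorithm~\ref{F1} as written, which settles the obstacle you flag. That algorithm updates by $x_{i+1}=x_i+\lfloor (y-x_i^m)/(m x_i^{m-1})\rfloor$. Since $x_i$ is an integer it can be moved inside the floor, and $x_i+\frac{y-x_i^m}{m x_i^{m-1}}=\frac{(m-1)x_i+y/x_i^{m-1}}{m}$, so this is exactly your recurrence once the nested floor is removed. Its $x_0$ is a power of two with $m\lg x_0\ge \lg y$, so $x_0\ge y^{1/m}\ge s$. It advances $i$ only when $x_{i+1}<x_i$, and its Cases~1 and~2 fire exactly when $x_{i+1}\ge x_i$. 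A flip at $i=0$ cannot occur, because $x_0\ge y^{1/m}$ makes the numerator $y-x_0^m\le 0$ and so $x_1\le x_0$. Your invariants then give $x_i=s$ at the stopping index, so the first candidate check, on $x_i$ itself, already succeeds. Rely on the stated initialization rather than the paper's Examples~3--4: these start from $x_0=32$, below $y^{1/m}\approx 34$, and from there your stopping rule would return $32$.

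The route differs from the paper's. The paper proves $Y^m-x_i^m\ge (Y-x_i)\,m x_i^{m-1}$, with $Y=y^{1/m}$, separately in the subcases $x_i\ge Y$ and $Y>x_i\ge Y-1$. Multiplied out, this is literally your AM--GM inequality $\frac{(m-1)x_i+y/x_i^{m-1}}{m}\ge Y$, so the key estimate is the same.

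The real difference is the floor bookkeeping. The paper uses $\lfloor Y-x_i\rfloor\ge Y-x_i-1$ and so only gets $x_{i+1}\ge Y-1$. That loss forces its repeat-versus-flip case analysis and its weaker conclusion that $\lfloor y^{1/m}\rfloor$ is one of $x_n$, $x_n+1$. You take the floor of the whole bound (equivalently $x_i+\lfloor Y-x_i\rfloor=\lfloor Y\rfloor$) and get $x_{k+1}\ge s$ for every positive $x_k$, with no case split. This gives termination after at most $x_0-s$ decreasing steps and shows the stopping iterate is exactly $s$. It also makes the $x_n+1$ check in Algorithm~\ref{F1} superfluous.

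Your proof is essentially the paper's own analysis of Algorithm~\ref{F2}, where the minimum of $f$ is found by calculus instead of AM--GM. The identity above shows the two algorithms iterate the same map and differ only in $x_0$, which is why Examples~1 and~5, and Examples~2 and~6, coincide. The paper's route stays with the additive Newton correction, but yours is shorter and proves a sharper statement.
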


\begin{proof}
$ $ \\ $ $
In the remainder, $\lg{y}$ denotes the logarithm of $y$ base two.
We are going to use the following inequalities related to
the floor and ceiling functions.
\begin{equation}
\label{pp1a}
x-1  <  \floor{x} \leq x \leq \ceil{x} < x+1 .
\end{equation}
\begin{equation}
\label{pp1b}
1+ \floor{x} \geq \ceil{x} .
\end{equation}
$ $ \\ $ $
The symbol $\floor{< 0}$ will denote a negative quantity. The symbol
$\ceil{>0}$ will denote a positive quantity.
In either case we are only interested in the sign, but not the
quantity itself.
The Newton-Raphson method is highlighted below. For more information
see for example \cite{Rudin}.
$ $ \\ $ $
{\bf (a) Newton-Raphson iterative method: an overview.}
$ $ \\ $ $
Given $f(x)$ in Eq.(\ref{pp2}) that follow,
\begin{equation}
\label{pp2}
  f(x) = y - x^m ,
\end{equation}
its solution for $f(x)=0$ is $x = y^{1/m}$. Let $Y= y^{1/m}$.
Moreover we derive that
$f^\prime (x) = - m x^{m-1}$.
Let us then use the Newton-Raphson iterative method, where
$y^{1/m} = Y$ implies $y = Y^m$ as follows.
\begin{eqnarray}
\label{pp3}
x_{i+1} &=& x_{i} - \frac{f(x_i ) }{ f^\prime (x_i )} \nonumber \\
        &=& x_i + \frac{y-x_i^m}{m \cdot x_i^{m-1}}
        = x_i + \frac{Y^m -x_i^m}{m \cdot x_i^{m-1}},
\end{eqnarray}
for $i \geq 0$. The initial condition
$x_0$ will be determined and described later.
$ $ \\ $ $
{\bf (b) Integer Newton-Raphson.}
$ $ \\ $ $
In order to obtain an integer approximation, we
modify Eq.(\ref{pp3}) so that we get integer values at various
iterations.
\begin{eqnarray}
x_{i+1}
&=&
\label{pp4}
    x_i + \left\lfloor \frac{y-x_i^m}{m \cdot x_i^{m-1}} \right\rfloor
=
    x_i + \left\lfloor \frac{Y^m -x_i^m}{m \cdot x_i^{m-1}} \right\rfloor .
\end{eqnarray}
The numerator and denominator of the fraction are integer numbers,
so the fraction becomes the integer quotient of a long integer division.
If $x_0$ the sequence generated of $x_0 , x_1 , \ldots $ is an
integer sequence.
$ $ \\ $ $
{\bf (c) Choice of the initial value $x_0$.}
$ $ \\ $ $
The integer recurrence relation  of Eq.(\ref{pp4})
provides an integer sequence $x_0 , x_1 , \ldots $
as long as $x_0$, the initial condition, is also an integer.
We first note the following.
\begin{eqnarray}
\label{pp5}
 2^{1+ \floor{\ceil{\lg{y}}/m}} &\geq& 2^{\ceil{\ceil{\lg{y}}/m}} \geq
2^{\lg{y}/m} = y^{1/m} = Y,
\end{eqnarray}
At the same time,
\begin{eqnarray}
\label{pp5a}
 2^{1+ \floor{\ceil{\lg{y}}/m}} &\leq& 2 \cdot 2^{\ceil{\lg{y}}/m} \leq
4 \cdot 2^{\lg{y}/m} =  4 y^{1/m} = 4Y,
\end{eqnarray}
and therefore consider choosing an $x_0$ such that
\begin{eqnarray}
\label{pp5b}
Y= y^{1/m} \leq x_0 \leq  2 \cdot 2^{\ceil{\lg{y}}/m} \leq 4 y^{1/m} =4Y
  \Leftrightarrow\\
\label{pp5c}
 \lfloor \lg{y} \rfloor \leq \lg{y}  \leq m \cdot \lg{x_0}
\leq  m + {\ceil{\lg{y}}} .
\end{eqnarray}
$ $ \\ $ $
Starting with $y$, finding $\ceil{\lg{y}}$ and $\floor{\lg{y}}$
is easy as it requires shift operations and can be done in
$O(\lg{y})$ shifts.  
Then by raising integer two to the $\floor{\lg{y}}$-th power
we can verify whether
$\floor{\lg{y}} = \lg{y}$ or not.
Then,
the integer range $[\lg{y}  ,  m + {\ceil{\lg{y}}} ]$
contains at least $m$ consecutive integers and therefore an integer
that is a multiple of $m$.  We set that integer multiple of $m$ to  
$m \cdot \lg{x_0}$.  By implication $\lg{x_0}$ and thus $x_0$ become 
integer.
Then, $x_0$ still satisfies  inequalities
 Eq.(\ref{pp5c}) and Eq.(\ref{pp5b}).
Therefore $x_0$ is easy to compute and is calculated the
way described.
$ $ \\ $ $
{\bf (d) Iteration $i=0 \Rightarrow i+1=1$.}
Consider an $x_0$ chosen as described in (c) above
with $x_0$  satisfying Eq.(\ref{pp5b}) and Eq.(\ref{pp5c})
that yields $x_0 \geq y^{1/m} =Y$
and consequently  $x_0^m -y \geq 0$ and $x_0^m - Y^m \geq 0$ 
or, 
\begin{eqnarray}
\label{pp6}
y- x_0^m &\leq& 0 \Leftrightarrow  Y^m - x_0^m  \leq 0
\end{eqnarray}
By Eq.(\ref{pp4}) because of Eq.(\ref{pp6}) for $i=0$ we obtain that
$x_1 = x_0 + \floor{ \leq   0}  \leq  x_0$ as follows.
\begin{eqnarray}
\label{pp6a}
x_{1}
&=&
    x_0 + \left\lfloor \frac{Y^m -x_0^m}{m \cdot x_0^{m-1}} \right\rfloor
=
    x_0 + \left\lfloor \frac{y-x_0^m}{m \cdot x_0^{m-1}} \right\rfloor
\leq x_0 .
\end{eqnarray}
Note that the numerator $Y^m -x_0^m$ in Eq.(\ref{pp6a}) is 
non-positive by way of $x_0 \geq y^{1/m} =Y$.
If it is zero, then $Y^m = x_0^m$ and  $x_1 = x_0$,
which imply $x_0 = Y = y^{1/m}$ and since $x_0$
is an integer we have $\lfloor y^{1/m} \rfloor = x_0$ and 
the calculation goes no further.
Otherwise,
subsequence $x_0 , x_1 $ 
is a decreasing sequence since $x_0 \geq x_1 $ and it is not
$x_0 = x_1$.
Furthermore, because $x_0  \geq Y $, we have the following,
\begin{eqnarray}
x_0^m - y   =
x_0^m - Y^m &=& ( x_0 - Y) ( x_0^{m-1} + x_{0}^{m-2}Y + \ldots + Y^{m-1})
\nonumber \\
          &\leq& (x_0 - Y ) m x_0^{m-1} ,
\end{eqnarray}
and because of this, Eq.(\ref{pp6a}) further yields
the following.
\begin{eqnarray}
\label{ppa}
x_{1  }
   & =  &
    x_0 + \left\lfloor
            \frac{y -x_0^m}{mx_0^{m-1}}
          \right\rfloor  \nonumber  \\
   & \geq  &
   x_0 + \left\lfloor
            \frac{(Y-x_0 )m x_0^{m-1}}{mx_0^{m-1}}
          \right\rfloor  \nonumber  \\
   &\geq&  x_0 + (Y-x_0 ) -1  \nonumber \\
   &\geq&  Y -1   \nonumber \\
   &=&  y^{1/m} -1   \nonumber \\
   &\geq&  \floor{y^{1/m}} -1   .
\end{eqnarray}
We have just proved that 
$x_1 \geq Y -1$ and then further derived that 
$x_{ 1 } \geq \floor{y^{1/m}} -1$.
$ $ \\ $ $
{\bf (e) Iteration $i=1 \Rightarrow i+1=2$.}
Consider once more Eq.(\ref{pp4}).
\begin{eqnarray}
\label{pp7}
x_{2  } &=&
    x_1 + \left\lfloor
               \frac{Y^m - x_1^m}{mx_1^{m-1}}
         \right\rfloor
   \geq
    x_1 + \frac{Y^m - x_1^m}{mx_1^{m-1}} -1
\end{eqnarray}
We distinguish two   subcases then: (e1) and (e2), that
can be generalized for any $i>0$ by induction.
$ $ \\ $ $
{\bf (e1) Iteration $i=1 \Rightarrow i+1=2$ and $x_{1} \geq Y$.}
$ $ \\ $ $
For $i=1$ and $i+1=2$ and assuming $x_1 \geq Y= y^{1/m}$
we consquently obtain after noting that
$x_1^{m-1-j} Y^{j} \leq x_1^{m-1}$, $0\leq j \leq m-1$, the following.
\begin{eqnarray}
\label{pp8}
x_1^m - y  =
x_1^m - Y^m &=& ( x_1 - Y) ( x_1^{m-1} + x_{1}^{m-2}Y + \ldots + Y^{m-1})
\nonumber \\
          &\leq& (x_1 - Y ) m x_1^{m-1} ,
\end{eqnarray}
i.e. $x_1^m - Y^m \leq (x_1 - Y) m x_1^{m-1}$.
Reversing the inequality we obtain
equivalently that
$Y^m - x_1^m \geq (Y- x_1 ) m x_1^{m-1}$.
Therefore, for $x_1 \geq Y$,  Eq.(\ref{pp7}) because of
Eq.(\ref{pp8}) gives the following.
\begin{eqnarray}
\label{pp9}
x_{2  }
   & =  &
    x_1 + \left\lfloor
                \frac{Y^m-x_1^m}{mx_1^{m-1}}
         \right\rfloor \nonumber    \\
   &\geq&
    x_1 + \left\lfloor
                \frac{(Y-x_1 )m x_1^{m-1}}{mx_1^{m-1}}
          \right\rfloor \nonumber  \\
   &\geq&  x_1 + (Y-x_1 ) -1  \nonumber \\
   &\geq&  Y -1   \nonumber \\
   & =  &  y^{1/m} -1   \nonumber \\
   &\geq&  \floor{y^{1/m}} -1   .
\end{eqnarray}
We have just proved that 
if $x_1 > Y = y^{1/m}$ then,
$x_2 \geq Y-1$ and also derived
$x_{ 2 } \geq \floor{y^{1/m}} -1$.
%
This generates the following lemma by induction.
\begin{lem}
\label{ppl1}
If $x_i \geq Y= y^{1/m}$ then $x_{i+1} \geq Y-1 = y^{1/m} -1$,
for each  $i \geq 0$.
\end{lem}
\bigskip
Note that $x_i \geq Y= y^{1/m}$ is checked by comparing
$x_i^m \geq y$ instead and the same applies for the condition
of  Lemma~\ref{ppl2}.
We have also proved the result summarized in Lemma~\ref{ppl2} below.
\begin{lem}
\label{ppl2}
If $x_i \geq Y= y^{1/m}$ then $x_{i+1} \leq x_i$, for each 
$i \geq 0$ 
Furthermore
if it is not $x_{i+1} < x_{i}$ then $x_{i+1} = x_{i}$ 
and $\lfloor y^{1/m} \rfloor = x_i$.
\end{lem}
\begin{proof}
This is a consequence of
Eq.(\ref{pp4}) which shows that if
$x_i \geq Y= y^{1/m}$, then  $y- x_i^m < 0$, and
$x_{i+1} = x_i + \floor{ \leq  0} \leq x_i$.
The latter part of Lemma~\ref{ppl2} is similarly derived from 
the discussion of part (d) earlier. If $x_{i+1} = x_{i}$ then
$Y^m = y =  x_i^m$ and  $y^{1/m} = \lfloor y^{1/m} \rfloor = x_i$.
\end{proof}
$ $ \\ $ $ 
We move to case (e2).
By Eq.(\ref{ppa}) $x_1 \geq Y-1$. If it is not case (e1)
where $x_1 \geq Y$, then the only case left is
$Y > x_1 \geq Y-1$. This leads to case (e2).
$ $ \\ $ $
{\bf (e2) Iteration $i=1 \Rightarrow i+1=2$ and $Y> x_{1} \geq Y-1$.}
$ $ \\ $ $
In this case $x_1^m < Y^m$ and thus
\begin{eqnarray}
\label{pp10}
y   - x_1^m  =
Y^m - x_1^m  &=& ( Y - x_1 )( x_1^{m-1} + x_{1}^{m-2}Y + \ldots + Y^{m-1})
\nonumber \\
          &\geq& (Y- x_1  ) m x_1^{m-1} ,
\end{eqnarray}
Subsequently, we have the following by way of Eq.(\ref{pp4}) for $i=1$.
\begin{eqnarray}
\label{pp10a}
x_{2  }
   & =  &
    x_1 + \left\lfloor
                \frac{y-x_1^m}{mx_1^{m-1}}
         \right\rfloor \nonumber    \\
   & =  &
    x_1 + \left\lfloor
                \frac{Y^m-x_1^m}{mx_1^{m-1}}
         \right\rfloor \nonumber    \\
   &\geq&
    x_1 + \left\lfloor
                \frac{(Y-x_1 )m x_1^{m-1}}{mx_1^{m-1}}
          \right\rfloor \nonumber  \\
   &\geq&  x_1 + (Y-x_1 ) -1  \nonumber \\
   &\geq&  Y -1   \nonumber \\
   & =  &  y^{1/m} -1   \nonumber \\
   &\geq&  \floor{y^{1/m}} -1   ,
\end{eqnarray}
as in subcase (e1).
The corresponding Lemmas are then as follows.
\begin{lem}
\label{ppl1a}
If $Y> x_i \geq Y-1= y^{1/m}-1$ then $x_{i+1} \geq Y-1 = y^{1/m} -1$,
for each  $i \geq 0$.
\end{lem}
\begin{lem}
\label{ppl2a}
If $Y> x_i \geq Y-1= y^{1/m}-1$ then $x_{i+1} \geq x_i$,
for each  $i \geq 0$.
Furthermore
if it is not $x_{i+1} > x_{i}$ then $x_{i+1} = x_{i}$ 
and $\lfloor y^{1/m} \rfloor = x_i$.
\end{lem}
The proof of Lemma~\ref{ppl1a} follows from Eq.(\ref{pp10a}).
The proof of Lemma~\ref{ppl2a} follows from Eq.(\ref{pp4})
and a discussion similar to the proof of Lemma~\ref{ppl2}.
$ $ \\ $ $
We wrap things up.
Consider the sequence of values $x_0 , x_1 , x_2 , \ldots$
obtained by applying the
Newton-Raphson formula Eq.(\ref{pp4}).
One of three cases might apply.
$ $ \\ $ $
{\bf Case 1.}
Let $x_{n+1} = x_n$ occurs for the first time after a decreasing
sequence i.e. we have $x_0 > x_1 > \ldots > x_{n-1} > x_n = x_{n+1}$.
Note that for a decreasing sequence
 $x_{i+1} =x_i + \floor{<0} \leq x_i -1 < x_i$.
In order to have
$x_{n+1} = x_n$ we must have that the quantity within the floor
of  Eq.(\ref{pp4}) is non-negative,
i.e. $x_n$ is such that $y- x_n^m \geq 0$, thus implying
$x_n^m \leq y$ or equivalently
$x_n \leq  y^{1/m}$.  This is sub-case (e2) earlier.
If $x_n =y^{1/m}$ then $x_n = \floor{y^{1/m}}$ and the calculation
is over. Otherwise $x_n <y^{1/m}$.
This combined with the precondition of Lemma~\ref{ppl1}
(or Lemma~\ref{ppl2}) for
$i=n-1$ which leads to $x_{n-1} > y^{1/m}$ results in the following:
\[
x_{n-1} > y^{1/m} \geq x_n.
\]
Note that if $x_{n-1} = Y$ then $x_n = x_{n-1}$ but
we have $x_{n-1}  > x_{n}$.
In addition, by  Lemma~\ref{ppl2} for $i=n-1$ we know that
$x_n \geq Y-1 \geq  \floor{y^{1/m}} -1$.
Combining the two we obtain the following.
\[
x_{n-1} > y^{1/m} \geq x_n  \geq \floor{y^{1/m}} -1.
\]
Therefore we can find $ \floor{y^{1/m}}$ by just considering one
of $x_n$ and $x_n +1$:
one of these terms is $ \floor{y^{1/m}}$.
$ $ \\ $ $
{\bf Case 2.}
After a decreasing sequence we have a flip that is
we have $x_0 > x_1 > \ldots > x_{n-1} > x_n < x_{n+1}$.
Note that by the base case and the choice of $x_0$
we have $x_0 \geq x_1 $, therefore the sequence is non-increasing;
in fact it is decreasing at the start unless $x_0 = y^{1/m} =Y$, 
$Y$ is integer, and then we can stop for $x_0 =\floor{y^{1/m}}$.
Therefore in case 2 we have that $x_{n-1} > x_n$ but $x_{n+1} > x_n$
results in a flip. The sequence flips for the first time. 
By Lemma~\ref{ppl1} for $i=n-1$  we have that
$x_{n-1} \geq Y=y^{1/m}$ and
$x_{n} \geq Y-1$.
By Lemma~\ref{ppl2} for $i=n-1$  we have that
$x_{n-1} \geq Y=y^{1/m}$ and
$x_{n} < x_{n-1}$. This is because the sequence is decreasing
up to that point.
For $x_n$ we have obtained from above that $x_n \geq Y-1$.
There are two possibilities for $x_n$: case (2a) where $x_n \geq Y$
and case (2b) where $x_n < Y$.
If case (2a) is applicable, then Lemma~\ref{ppl1} implies
$x_{n+1} \leq x_n$. However at $n+1$ we have a flip
with $x_{n+1} > x_n$. Therefore case (2a) is impossible.
We are left with case (2b) and $x_n $ such that
$Y > x_n > Y-1$. Then Lemma~\ref{ppl1a} applies to
give $x_{n+1} \geq Y-1$, and Lemma~\ref{ppl2a}
to derive $x_{n+1} \geq x_n$. But since $x_{n+1} > x_{n}$
equality $x_{n+1}=x_n$  is not possible and 
$x_{n+1} > x_n$ is confirmed
If we combine the previous derivations we obtain
\[
x_{n-1} \geq Y= y^{1/m} >    x_n  \geq Y-1 = \floor{y^{1/m}} -1.
\]
Then, $ \floor{y^{1/m}}$ is one of $x_n$ or $x_n +1$.
$ $ \\ $ $
{\bf Case 3.}
If $x_{n+1}<x_n$, then continue with the next iteration.
$ $ \\ $ $
{\bf Conclusion.}
$ $ \\ $ $
In either {    Case 1} or {    Case 2}
we stop at an iteration where either the sequence flips sign
(the next term become larger than the previous term)
or the next term is equal to the previous one.
If the next
term is smaller than the one before ({    Case 3})
we continue to generate the next term of the  sequence.
As soon as we generate the flipping term ($x_{n+1}$ in Case 2)
or the repeating term ($x_{n+1} = x_n$ in Case 1) we stop
Newton-Raphson, and go back to $x_n$. One of
$x_n$ or $x_n +1$ is 
$ \floor{y^{1/m}}$. We determine the answer
by calculating $x_n^{m}$ and $(x_n +1)^m$, as needed.
In other words, in order to determine whether $x_n = \floor{y^{1/m}}$,
we assume that if it is true
\begin{equation}
\label{eqFloor}
x_n \leq  y^{1/m} < x_n +1
\Leftrightarrow
x_n^m \leq  \left( y^{1/m}\right)^m < \left( x_n +1 \right)^m
\Leftrightarrow
x_n^m \leq  y  < \left( x_n +1 \right)^m ,
\end{equation}
and thus we check the last part of Eq.(\ref{eqFloor}).
We do likewise to determine whether  $x_n +1$ is a candidate for 
$x_n + 1 = \floor{y^{1/m}}$.
$ $ \\ $ $
We finally note that if the calculation of $\floor{y^{1/m}}$
relates to determining whether $y$
is an integer power of an integer, we just need to
verify whether $x_n^m$ of $(x_n +1)^m$ is equal to $y$ or not
and thus the checking of Eq.(\ref{eqFloor}) can be skipped.
$ $ \\ $ $

\SetKwComment{Comment}{/* }{ */}
\SetKwRepeat{Do}{do}{while}
\begin{algorithm}[H]
\KwIn{Natural number $y$ greater than 2, natural number $m >1$}
\KwOut{Integer $x$ such that $\lfloor x^{1/m} \rfloor = y$}

 $i=0$;
 $x_0 = Init(y,m)$  
\Comment*[r]{Init(y,m) Per Eq.(\ref{pp5b}) and Eq.(\ref{pp5c})}

\While{True}{

 $x_{i+1} =
    x_i + \left\lfloor \frac{y-x_i^m}{m \cdot x_i^{m-1}} \right\rfloor $;

 \If(\tcc*[f]{Case 1}){$x_{i+1} = x_{i}$}{
  \If{$\mathbf{CheckCandidateSolution} ( x_{i},y,m)    = \mathbf{YES}$}{
     \Return{$x_{i}$};
  }
  \If{$\mathbf{CheckCandidateSolution} ( x_{i}+1,y,m)  = \mathbf{YES}$}{
     \Return{$x_{i}$};
  }
 }
 \If(\tcc*[f]{Case 2}){$x_{i-1} > x_{i}$ and $x_{i} < x_{i+1}$}{
  \If{$\mathbf{CheckCandidateSolution} ( x_{i},y,m)   = \mathbf{YES}$}{
     \Return{$x_{i}$};
  }
  \If{$\mathbf{CheckCandidateSolution} ( x_{i}+1,y,m) = \mathbf{YES}$}{
     \Return{$x_{i}$};
  }
 }

 \If(\tcc*[f]{Case 3}){$x_{i+1} < x_{i}$}{
  $i=i+1;$

  continue \Comment*[r]{Move to the next iteration}
 }

}

 \caption{FindFloor(y,m) : Find $\lfloor y^{1/m} \rfloor$}
 \label{F1}
\end{algorithm}
\end{proof}

$ $ \\ $ $
{\bf Example 1.}
For the simple case $y=10^8$ and $m=8$,
the sequence generated is $16, 14, 12$, $10, 10$ and
$x=10$ is reported.

$ $ \\ $ $
{\bf Example 2.}
For the case $y=96889010407$ and $m=13$,
the sequence generated is $8,7,7$ 
and $x=7$ is reported.

$ $ \\ $ $
{\bf Example 3.}
For the case $y=52523350144$ and $m=7$,
the sequence generated is $32,34,34$
and $x=34$ is reported.

$ $ \\ $ $
{\bf Example 4.}
For the case $y=52523350141$ and $m=7$,
the sequence generated is $32,34$
and $x=33$ is reported.

\section{A second approach}

Because of the case complexity involved in the analysis
of Algorithm~\ref{F1} we pursued another interpretation of
Integer Newton-Raphson for $\floor{y^{1/m}}$.
We call this second approach Algorithm~\ref{F2}.

\begin{lem}
Given a natural integer number $y$ and
a natural integer number $m>1$ Algorithm~\ref{F2}
determines a natural integer number $x$ such that
\[
    x = \left\lfloor y^{\frac{1}{m}} \right\rfloor .
\]
\end{lem}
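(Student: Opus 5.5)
I will assume that Algorithm~\ref{F2} is the ``rounded-down Newton step'' version of Eq.(\ref{pp3}), written entirely with integer divisions:
\[
x_{i+1} = \left\lfloor \frac{(m-1)x_i + \left\lfloor y / x_i^{m-1} \right\rfloor}{m} \right\rfloor ,
\]
with the same initial value $x_0 \geq Y = y^{1/m}$ as in part (c) of the proof for Algorithm~\ref{F1}. I also assume the stopping rule is to halt at the first $i$ with $x_{i+1} \geq x_i$ and return $x_i$. If Algorithm~\ref{F2} differs in these details, the plan adapts, but the structure below is the one I will aim for.

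Let $r = \floor{y^{1/m}}$. The proof has two invariants.

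\textbf{Invariant (A): every iterate is at least $r$.} For any positive integer $x_i$, drop the floors using the fact that nested floors of divisions by positive integers combine, so $\floor{\floor{a/b}/c} = \floor{a/(bc)}$. Hence
\[
x_{i+1} = \left\lfloor \frac{(m-1)x_i^m + y}{m x_i^{m-1}} \right\rfloor ,
\]
because $(m-1)x_i$ is an integer and can be moved inside the inner floor. By the AM--GM inequality applied to the $m$ numbers $x_i,\dots,x_i$ (taken $m-1$ times) and $y/x_i^{m-1}$, we get
\[
\frac{(m-1)x_i + y/x_i^{m-1}}{m} \geq \left( x_i^{m-1}\cdot \frac{y}{x_i^{m-1}}\right)^{1/m} = Y .
\]
Taking floors gives $x_{i+1} \geq \floor{Y} = r$. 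Together with $x_0 \geq Y \geq r$ from Eq.(\ref{pp5b}), every iterate satisfies $x_i \geq r$.

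\textbf{Invariant (B): the sequence strictly decreases while it is above $r$.} Suppose $x_i > r$. Then $x_i \geq r+1 > Y$, so $x_i^m > y$. Hence $y/x_i^{m-1} < x_i$, which gives $(m-1)x_i + \floor{y/x_i^{m-1}} < m x_i$, and therefore $x_{i+1} < x_i$. This step replaces Lemma~\ref{ppl2} from the first approach.

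\textbf{Termination and correctness.} Conversely, suppose $x_i = r$. Then $x_i^m \leq y$, so $\floor{y/x_i^{m-1}} \geq x_i$, and the update gives $x_{i+1} \geq x_i$. So the stopping condition $x_{i+1} \geq x_i$ fires exactly when $x_i = r$ and never before. The iterates form a strictly decreasing sequence of integers bounded below by $r$, so they reach $r$ after finitely many steps, at most $x_0 - r \leq 4Y$ by Eq.(\ref{pp5a}). The algorithm therefore terminates and returns $x_i = \floor{y^{1/m}}$.

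\textbf{Main obstacle.} The crux is Invariant (A): showing that the integer truncations never push an iterate below $r$. This is the reason for folding the two floors into the single expression $\floor{((m-1)x_i^m + y)/(m x_i^{m-1})}$ before applying AM--GM. The remaining care is in the boundary cases. If $y$ is a perfect power with $x_0 = Y$, the loop stops immediately and returns $x_0$. If instead the stopping test uses equality only, I will note that $x_{i+1} > x_i = r$ is possible, so the test must be $x_{i+1} \geq x_i$. I would also add a brief remark on the role of the hypothesis $y > 2$: it ensures $x_0 \geq 1$, so that the divisions in the update are well defined.
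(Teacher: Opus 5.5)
Your proof is correct and follows essentially the same route as the paper. Both arguments rewrite the step as $x_{i+1}=\lfloor f(x_i)\rfloor$ with $f(x)=((m-1)x+y/x^{m-1})/m$ and show $f\geq y^{1/m}$, so every iterate is at least $\lfloor y^{1/m}\rfloor$ (you use AM--GM, the paper a calculus minimization of $f$ at $x=y^{1/m}$); both then show strict decrease while $x_i>y^{1/m}$ and conclude that the stop $x_{i+1}\geq x_i$ forces $x_i=\lfloor y^{1/m}\rfloor$. Your version is slightly more complete: it also proves that the stop does fire at $x_i=\lfloor y^{1/m}\rfloor$ and that the loop terminates. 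You assumed the Algorithm~\ref{F1} initializer, whereas Algorithm~\ref{F2} actually uses $x_0=2^{\lceil(\lfloor\lg y\rfloor+1)/m\rceil}$, but this is harmless because only $x_0\geq y^{1/m}$ is used.
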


The proposed solution uses a different sequence of
floor operations in converting the generic Newton-Raphson
of Eq.(\ref{pp3}) into Eq.(\ref{pp4}) that was
used in  Algorithm~\ref{F1}.

\begin{proof}
$ $ \\ $ $
We are going to use a nested inequality of floor functions
in addition to the ones established with the previous problem.
We quote~\cite{Wiki26a}, for integer $n,m$ (in our case positive
integer) and real (in our case positive) $x$.
\begin{equation}
\label{nnp1}
\lfloor \frac{\floor{x} + m}{n} \rfloor
=
\lfloor \frac{x + m}{n} \rfloor
\end{equation}
$ $ \\ $ $
We proceed to using the  Newton-Raphson approximation as follows. \\
$ $ \\ $ $
{\bf (a) Newton-Raphson iterative method.}
$ $ \\ $ $
The solution for $f(x)=0$ of Eq.(\ref{pp2}) as before is
is $x = y^{1/m}$. Moreover we derive that
$f^\prime (x) = - m x^{m-1}$.
Let us then use the Newton-Raphson iterative method, where
$y^{1/m} = Y$ implies $y = Y^m$ as follows.
\begin{eqnarray}
\label{npp3}
x_{i+1} &=& x_{i} - \frac{f(x_i ) }{ f^\prime (x_i )}
        = x_i + \frac{y-x_i^m}{m \cdot x_i^{m-1}}
        = x_i - \frac{x_i}{m} + \frac{y}{m \cdot x_i^{m-1}}  \nonumber \\
        &=& \left( (m-1) x_i  +  \frac{y}{x_i^{m-1}} \right)
            \cdot  \frac{1}{m},
\end{eqnarray}
for $i \geq 0$.  The initial condition  $x_0$ will be determined 
and described later.
$ $ \\ $ $
{\bf (b) Integer Newton-Raphson.}
$ $ \\ $ $
In order to obtain an integer approximation, we
modify Eq.(\ref{npp3}) so that we get integer values at various
iterations.
\begin{eqnarray}
x_{i+1}
&=&
\label{npp4a}
  \left\lfloor
     \left( (m-1) x_i + \left\lfloor \frac{y}{x_i^{m-1}} \right\rfloor
     \right) \cdot  \frac{1}{m}
  \right\rfloor ,
\end{eqnarray}
which by way of Eq.~(\ref{nnp1}) is equivalent to
\begin{eqnarray}
x_{i+1}
&=&
\label{npp4b}
  \left\lfloor
     \left( (m-1) x_i +   \frac{y}{x_i^{m-1}}
     \right) \cdot  \frac{1}{m}
  \right\rfloor .
\end{eqnarray}
The numerator and denominator of the internal fraction are
integers.
The numerator external fraction in Eq.(\ref{npp4a}) is an
integer. The denominator is consider to be the $m$ of $1/m$,
which is also an integer. Algorithm~\ref{F2} uses Eq.(\ref{npp4a}),
though its analysis would also use Eq.(\ref{npp4b}).
$ $ \\ $ $
{\bf (c) Choice of the initial value $x_0$.}
$ $ \\ $ $
This integer recurrence relation of Eq.(\ref{npp4a}) provides 
an integer sequence $x_0 , x_1 , \ldots $ as long as 
$x_0$, the initial condition, is also an integer.
We first note the following.

We choose $x_0$ such that
\begin{eqnarray}
\label{npp5}
 x_0 =      2^{\ceil{  \frac{\floor{\lg{y}}+1}{m} }  }
     &\geq& 2^{        \frac{\floor{\lg{y}}+1}{m}    }
      \geq  2^{        \frac{\lg{y}}{m}              }
       =    y^{1/m} = Y \geq \floor{y^{1/m}}.
\end{eqnarray}
i.e. $ x_0 \geq y^{1/m} = Y \geq \floor{y^{1/m}}$.
Starting with $y$, finding $\ceil{\lg{y}}$ and $\floor{\lg{y}}$
is easy as it requires shift operations and can be done in
$O(\lg{y})$ shifts.  Therefore $x_0$ is easy to compute
and is calculated the way described.
$ $ \\ $ $
{\bf (c) Minimization of Newton-Raphson term of Eq.(\ref{npp4b}).}
$ $ \\ $ $
From Eq.(\ref{npp4b}) that is equivalent to Eq.(\ref{npp4a}) 
we proceed as follows by isolating the argument of the floor
function.
\begin{eqnarray}
 f(x)
&=&
\label{npp4c}
     \left( (m-1) x +   \frac{y}{x^{m-1}}
     \right) \cdot  \frac{1}{m}
\end{eqnarray}
thus transforming equation Eq.(\ref{npp4b}) into the following
form.
\begin{eqnarray}
x_{i+1}
&=&
\label{npp4d}
  \left\lfloor
   f(x_i )
  \right\rfloor .
\end{eqnarray}
We observe the following
\[
 f^\prime (x) = \left( 1 - \frac{1}{m} \right)
                -
                y \cdot \left(  1 - \frac{1}{m} \right) x^{-m} .
\]
For
\[
f^\prime (x) = 0 ,
\]
we obtain (assuming $m \neq 1$, which is the case)
\[
  x= Y= y^{\frac{1}{m}} .
\]
Furthermore,
\[
f^{''} (x) = y m \left(  1 - \frac{1}{m} \right) x^{-m+1},
\]
and setting $  x= y^{\frac{1}{m}}$ we
obtain
\[
f^{''} (y^{\frac{1}{m}}) =
    y m \left(  1 - \frac{1}{m} \right) \cdot \frac{1}{y}
                    \cdot \frac{1}{y^{\frac{1}{m}}}
\]
which is positive for $1-1/m>0$, which is the case
since $m>1$.
Therefore function $f(x)$ has a minimum at
$x= y^{\frac{1}{m}}$
and calculating that minimum
\[
f(x)\left|_{x= y^{\frac{1}{m}}} \right. = y^{\frac{1}{m}} .
\]
Therefore from Eq.(\ref{npp4c}) and Eq.(\ref{npp4d}
we obtain the following.
\begin{eqnarray}
x_{i+1}
&=&
  \left\lfloor
     \left( (m-1) x_i +   \left\lfloor
                          \frac{y}{x_i^{m-1}}
                          \right\rfloor
     \right) \cdot  \frac{1}{m}
  \right\rfloor \nonumber \\
&=&
\label{npp5a}
  \left\lfloor
    f(x_i )
  \right\rfloor  \geq \floor{ y^{\frac{1}{m}} }.
\end{eqnarray}

\noindent
We obtain the first of three lemmas to follow.
\begin{lem}
\label{lemn1}
If $x_0 \geq y^{\frac{1}{m}} \geq
\floor{ y^{\frac{1}{m}} }$,
then $x_i \geq \floor{ y^{\frac{1}{m}} }$.
\end{lem}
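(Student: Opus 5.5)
The proof goes by induction on $i$, with the minimization property of $f$ in Eq.(\ref{npp4c}) doing all the work. For $i=0$ the claim is the hypothesis itself: $x_0 \geq y^{1/m} \geq \floor{y^{1/m}}$, and this is exactly what the choice of Eq.(\ref{npp5}) guarantees.

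For the inductive step, assume $x_i \geq \floor{y^{1/m}}$. We need $x_i$ to be a positive integer so that $f(x_i)$ is defined. Since $y>2$, we have $y^{1/m} > 1$ and hence $\floor{y^{1/m}} \geq 1$, so $x_i \geq 1$.

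Next we use Eq.(\ref{npp4b}), which rests on the identity Eq.(\ref{nnp1}), to rewrite the integer iteration Eq.(\ref{npp4a}) as $x_{i+1} = \floor{f(x_i)}$. The nested-floor identity applies because $(m-1)x_i$ is an integer: we have $(m-1)x_i + \floor{y/x_i^{m-1}} = \floor{(m-1)x_i + y/x_i^{m-1}}$, and then $\floor{\floor{z}/m} = \floor{z/m}$ for positive real $z$ and integer $m$.

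The key fact is that $f(x) \geq y^{1/m}$ for every $x>0$. The paper derives this from $f'$ and $f''$, but only the local second-derivative test is carried out there, so I would make it global. One way is to note that $f'(x) = (1-1/m)(1-yx^{-m})$ is negative for $x<Y$ and positive for $x>Y$, so $x=Y$ is the global minimizer on $(0,\infty)$. An alternative is the AM--GM inequality applied to the $m$ numbers $x,\ldots,x$ ($m-1$ copies) and $y/x^{m-1}$: their arithmetic mean is $f(x)$ and their geometric mean is $y^{1/m}$. Either argument gives $f(x_i) \geq y^{1/m}$.

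Since the floor function is monotone, $x_{i+1} = \floor{f(x_i)} \geq \floor{y^{1/m}}$, which closes the induction. The main obstacle is not the calculus but verifying cleanly that the integer recurrence of Eq.(\ref{npp4a}) really equals $\floor{f(x_i)}$, together with positivity of $x_i$. Once those are in place, the bound is immediate and holds for every positive $x_i$. In fact the inductive hypothesis is needed only to keep $x_i \geq 1$, so the lemma holds for all $i \geq 1$ regardless of whether the iterates stay above $Y$.
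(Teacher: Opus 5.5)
Your proof is correct and follows essentially the same route as the paper: rewrite the iteration as $x_{i+1}=\lfloor f(x_i)\rfloor$ via the nested-floor identity, use that $f$ attains its minimum value $y^{1/m}$ at $x=y^{1/m}$, and apply monotonicity of the floor. Your sign-of-$f'$ (or AM--GM) argument makes the minimum global, and your check that $x_i\geq 1$ makes $f(x_i)$ well defined; the paper's local second-derivative test leaves both points implicit.
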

We note that by Eq.(\ref{npp5}) it is 
indeed $x_0 \geq y^{\frac{1}{m}}$,
and also $x_0 \geq  \floor{y^{\frac{1}{m}}}$.

$ $ \\ $ $
{\bf (d) Sequence $x_0 , x_1 , x_2 , \ldots $.}
$ $ \\ $ $
From Eq.(\ref{npp4a}) or equivalently Eq.(\ref{npp4b})
we obtain the following.
\begin{eqnarray}
\label{npp6ab}
x_{i+1}
&=&
  \left\lfloor
     \left( (m-1) x_i +   \left\lfloor
                          \frac{y}{x_i^{m-1}}
                          \right\rfloor
     \right) \cdot  \frac{1}{m}
  \right\rfloor  \\
&=&
\label{npp6ba}
  \left\lfloor
     \left( (m-1) x_i +
                          \frac{y}{x_i^{m-1}}
     \right) \cdot  \frac{1}{m}
  \right\rfloor .
\end{eqnarray}
Consider the case $i=0$ and thus $i+1 =1$.
\begin{eqnarray}
x_{1}
&=&
\label{npp6b}
  \left\lfloor
     \left( (m-1) x_0 +
                          \frac{y}{x_0^{m-1}}
     \right) \cdot  \frac{1}{m}
  \right\rfloor
\end{eqnarray}
From Eq.(\ref{npp5}) we have $x_0 \geq y^{1/m}$
and therefore $y/ x_0^{m-1} \leq x_0$.
Eq.(\ref{npp6b}) then yields the following.
\begin{eqnarray}
x_{1}
=
\label{npp6c}
  \left\lfloor
     \left( (m-1) x_0 +
                          \frac{y}{x_0^{m-1}}
     \right) \cdot  \frac{1}{m}
  \right\rfloor
   &\leq &
  \left\lfloor
     \left( \left( m-1 \right) x_0 +  x_0
     \right) \cdot  \frac{1}{m}
  \right\rfloor   = \floor{ x_0 } \leq x_0.
\end{eqnarray}
We could use the same argument to show that $x_2 \leq x_1 $.
However we need
$x_1 \geq y^{1/m}$ but by Lemma~\ref{lemn1}
we only have
$x_1 \geq \floor{ y^{\frac{1}{m}} }$.
The only case that $x_1 \geq y^{1/m}$ is NOT the case
is for $ \floor{ y^{\frac{1}{m}} } = y^{1/m}$ but then
we stop the Newton-Raphson calculation .
We thus get the following second lemma.

\begin{lem}
\label{lemn2}
If $x_0 \geq y^{\frac{1}{m}} \geq
\floor{ y^{\frac{1}{m}} }$,
then $x_i \geq  y^{\frac{1}{m}} $.
Moreover if
$x_i  >    y^{\frac{1}{m}} $,
then $x_{i+1} < x_i$.
\end{lem}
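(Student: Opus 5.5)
The plan is an induction on $i$ driven by Eq.(\ref{npp6ba}) and the minimization of $f$ in Eq.(\ref{npp4c}), with the strict decrease handled separately using integrality. The base case $x_0 \geq y^{1/m}$ is exactly Eq.(\ref{npp5}).

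\textbf{Lower bound.} By Lemma~\ref{lemn1} (that is, Eq.(\ref{npp5a}), which uses $f(x)\geq f(y^{1/m}) = y^{1/m}$ for all $x>0$), every term satisfies $x_{i+1} \geq \floor{y^{1/m}}$. Since $x_{i+1}$ is an integer, there are only two possibilities: $x_{i+1} \geq y^{1/m}$, or $x_{i+1} = \floor{y^{1/m}} < y^{1/m}$.

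The second possibility cannot occur as a genuine counterexample in the regime the statement addresses. I will follow the paper's convention from the discussion before the lemma: the iteration stops as soon as $x_{i+1}$ does not strictly decrease. So the claim ``$x_i \geq y^{1/m}$'' is to be read for the terms generated before the stopping point, where the exceptional value $\floor{y^{1/m}}$ is exactly the answer returned.

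I expect this to be the main obstacle, because the literal statement is delicate. An integer term can land on $\floor{y^{1/m}}$, which lies strictly below $y^{1/m}$ when $y$ is not a perfect $m$-th power. The induction must therefore carry the disjunction ``$x_i \geq y^{1/m}$, or $x_i = \floor{y^{1/m}}$ and we stop'' rather than the bare inequality. I would make this explicit rather than rely on the sentence after Eq.(\ref{npp6c}).

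\textbf{Strict decrease.} For the second part, assume $x_i > y^{1/m}$. Then $x_i^m > y$, so $y/x_i^{m-1} < x_i$ strictly. Hence the argument of the outer floor in Eq.(\ref{npp6ba}) satisfies
\[
\left( (m-1)x_i + \frac{y}{x_i^{m-1}} \right)\cdot\frac{1}{m} \;<\; x_i .
\]
This is the computation of Eq.(\ref{npp6c}) with strict inequality. Because $x_i$ is an integer, the floor of a real number strictly below $x_i$ is at most $x_i - 1$, so $x_{i+1} \leq x_i - 1 < x_i$.

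\textbf{Consequences.} Combining the two parts, the sequence strictly decreases while its terms exceed $y^{1/m}$. It is bounded below by $\floor{y^{1/m}}$, so it must reach a term $x_n$ with $x_n \leq y^{1/m}$ after finitely many steps. By the integrality remark above, that term is $\floor{y^{1/m}}$, and the first index at which $x_{n+1} \geq x_n$ identifies it. This is presumably how the paper's third lemma and the stopping rule of Algorithm~\ref{F2} will be justified.
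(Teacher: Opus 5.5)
Your strict-decrease argument is the paper's, up to where integrality is used. The paper bounds the inner floor, $\lfloor y/x_i^{m-1}\rfloor\le x_i-1$, to get $x_{i+1}\le\lfloor (mx_i-1)/m\rfloor\le x_i-1$. You instead pass to the unnested form Eq.~(\ref{npp6ba}) and note that the outer argument lies strictly below the integer $x_i$. Either works.

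On the first assertion you depart from the paper, and rightly so, but say it plainly: $x_i\ge y^{1/m}$ is false as written, not merely delicate.
\begin{itemize}
\item The paper's proof only cites Lemma~\ref{lemn1}, which gives $x_i\ge\lfloor y^{1/m}\rfloor$ and nothing more.
\item The paper's own Example~8 is a counterexample: $y=52523350141=34^7-3$ and $m=7$ yield $x_6=33<y^{1/7}$.
\item In fact the assertion fails for every $y$ that is not a perfect $m$-th power, since the sequence always reaches $\lfloor y^{1/m}\rfloor$.
\item The remark before the lemma, placing the exception at $\lfloor y^{1/m}\rfloor=y^{1/m}$, has it backwards.
\end{itemize}

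So replace the sentence that the second possibility ``cannot occur as a genuine counterexample'', and the ``to be read as'' framing, with an explicit corrected claim. Your disjunction is already correct. It needs no induction, since it is just Lemma~\ref{lemn1} plus integrality.

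A sharper form, which your two steps also prove, is this. Let $n$ be the first index with $x_{n+1}\ge x_n$.
\begin{itemize}
\item For all $i<n$, $x_i>y^{1/m}$: $x_{i+1}<x_i$ and $x_{i+1}\ge\lfloor y^{1/m}\rfloor$ force $x_i\ge\lfloor y^{1/m}\rfloor+1$.
\item $x_n=\lfloor y^{1/m}\rfloor$: if $x_n>y^{1/m}$, the second part would give $x_{n+1}<x_n$.
\end{itemize}
That is exactly what the stopping rule of Algorithm~\ref{F2} requires.
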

\begin{proof}
The first part is Lemma~\ref{lemn1}.
For the second part, if $x_i >  y^{\frac{1}{m}} $ we
utilize Eq.(\ref{npp6ab}) and similarly to the derivation
$x_1 \leq x_0$ earlier, we derive first $ x_i^m > y $
and then $x_i > a= y / x_i^{m-1}$.
\begin{eqnarray}
\label{npp6d}
x_{i+1}
&=&
  \left\lfloor
     \left( (m-1) x_i +   \left\lfloor
                          \frac{y}{x_i^{m-1}}
                          \right\rfloor
     \right) \cdot  \frac{1}{m}
  \right\rfloor  \nonumber \\
&=&
  \left\lfloor
     \left( (m-1) x_i +   \left\lfloor
                          a
                          \right\rfloor
     \right) \cdot  \frac{1}{m}
  \right\rfloor  \nonumber \\
&\leq&
  \left\lfloor
     \left( (m-1) x_i +  
                          x_i -1
     \right) \cdot  \frac{1}{m}
  \right\rfloor  \nonumber \\
&=&
\label{npp6e}
  \left\lfloor
     \left( m x_i - 1 
     \right) \cdot  \frac{1}{m}
  \right\rfloor \nonumber \\
&\leq& x_{i}-1 < x_i
\end{eqnarray}
\end{proof}

The following lemma covers the case $x_i = y^{\frac{1}{m}}$.
\begin{lem}
\label{lemn3}
If $x_0 \geq y^{\frac{1}{m}} \geq
\floor{ y^{\frac{1}{m}} }$,
then $x_i  \geq  y^{\frac{1}{m}} $.
If $x_i \leq y^{\frac{1}{m}}$, 
because
$x_i \geq \floor{ y^{\frac{1}{m}} }$ and $x_i$ is
integer,
we have $x_i = \floor{ y^{\frac{1}{m}} } = y^{\frac{1}{m}}$.
\end{lem}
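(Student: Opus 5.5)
The plan is to get both parts of the statement from two facts already proved: the lower bound of Eq.~(\ref{npp5a}) and the descent property of Lemma~\ref{lemn2}. The only new ingredient is an integrality squeeze.

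\textbf{Step 1: the lower bound.} First I will re-establish, by induction on $i$, that $x_i \geq \floor{y^{1/m}}$ for every $i$. This is Lemma~\ref{lemn1}.
\begin{itemize}
\item The base case is Eq.~(\ref{npp5}), which gives $x_0 \geq Y$.
\item For the step, note that $x_{i+1} = \floor{f(x_i)}$ by Eq.~(\ref{npp4d}). Here $f$ attains its global minimum $Y$ on $x>0$ at $x=Y$.
\item I would rather not lean on the second-derivative computation. The cleaner route is AM--GM applied to the $m$ positive numbers $x_i,\ldots,x_i$ ($m-1$ copies) and $y/x_i^{m-1}$. Their product is $y$, so their arithmetic mean $f(x_i)$ is at least $y^{1/m}$.
\item Hence $x_{i+1} = \floor{f(x_i)} \geq \floor{Y}$, by monotonicity of the floor.
\end{itemize}

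\textbf{Step 2: the first assertion, $x_i \geq y^{1/m}$.} I will read this in the sense of Lemma~\ref{lemn2}: it holds for every iterate generated while the iteration is still running.
\begin{itemize}
\item By the second part of Lemma~\ref{lemn2}, as long as $x_i > Y$ the sequence strictly decreases, by at least one per step.
\item It therefore cannot stay above $Y$ forever. There is a first index $i$ with $x_i \leq Y$, and all earlier iterates satisfy $x_j > Y$.
\item The algorithm is designed to stop at that first index. So the content to prove is what happens there, which is Step 3.
\end{itemize}

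\textbf{Step 3: the squeeze.} Suppose $x_i \leq Y$. By Step 1, $\floor{Y} \leq x_i \leq Y < \floor{Y}+1$, using Eq.~(\ref{pp1a}). Since $x_i$ is an integer and the interval $[\floor{Y}, \floor{Y}+1)$ contains exactly one integer, $x_i = \floor{y^{1/m}}$.

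\textbf{The obstacle: the final equality with $y^{1/m}$.} The statement also claims $x_i = y^{1/m}$. This does not follow from $x_i \leq Y$ alone.
\begin{itemize}
\item It holds only if we additionally have $x_i \geq Y$, i.e.\ when the first assertion $x_i \geq Y$ is in force. Then $x_i = Y$, so $Y$ is an integer and $\floor{Y} = Y$.
\item Otherwise, when $y$ is not a perfect $m$-th power, the iterate can drop strictly below $Y$. The correct conclusion is then only $x_i = \floor{Y} < Y$.
\end{itemize}
So I will prove the lemma in two cases:
\begin{itemize}
\item under the hypothesis $x_i \geq Y$, the conclusion is $x_i = Y = \floor{Y}$;
\item in general, the conclusion is $x_i = \floor{Y}$.
\end{itemize}
I expect most of the care to go into this case split. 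The second case is what Algorithm~\ref{F2} actually needs: the algorithm detects the first $i$ with $x_{i+1} \geq x_i$, and by Lemma~\ref{lemn2} this forces $x_i \leq Y$, hence $x_i = \floor{y^{1/m}}$.
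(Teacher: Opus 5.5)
Your diagnosis is correct and your argument is sound. The gap is in the paper's proof, not yours.

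\textbf{What the paper does.} It declares that the first assertion $x_i \geq y^{1/m}$ ``is Lemma~\ref{lemn1}''. It then squeezes: $x_i \geq Y$ together with $x_i \leq Y$ gives $x_i = Y$, which is therefore an integer. But Lemma~\ref{lemn1} only supplies $x_i \geq \floor{y^{1/m}}$, which is exactly your Step~1. So the squeeze between $Y$ and $Y$ rests on a lower bound that is never proved.

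\textbf{That bound is false.} It fails for every $y$ that is not a perfect $m$-th power. As you note, the iterates drop by at least one while they exceed $Y$, so some integer iterate satisfies $x_i \leq Y$. Since $Y$ is then not an integer, in fact $x_i < Y$. The paper's own Example~8 shows this: for $y = 52523350141 = 34^7 - 3$ and $m=7$ one gets $x_6 = 33 = \floor{y^{1/7}} < y^{1/7}$. This refutes both the first assertion and the equality $x_i = y^{1/m}$.

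\textbf{Your replacement.} The squeeze $\floor{Y} \leq x_i \leq Y < \floor{Y}+1$ is the right argument. It is also what the paper itself falls back on in its Case~3 analysis of Algorithm~\ref{F2}. Combined with your Step~1 and the descent half of Lemma~\ref{lemn2}, it is all the algorithm's correctness needs. That descent half has a valid proof, since it uses only $x_i > Y$ and integrality.

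\textbf{Smaller points.}
\begin{itemize}
\item Your AM--GM step is cleaner than the paper's calculus. It gives $f(x) \geq y^{1/m}$ for every $x>0$ directly. A positive second derivative at the critical point, by itself, certifies only a local minimum.
\item Your Step~2 ``reading'' does not rescue the first assertion. The stopping index is itself an iterate, and it lies strictly below $Y$ whenever $y$ is not a perfect $m$-th power. It is better to say plainly that the first assertion is false.
\item The correct lemma is then: $x_i \geq \floor{Y}$ for all $i$; whenever $x_i \leq Y$ one has $x_i = \floor{Y}$; and at such an index $x_i = Y$ exactly when $y$ is a perfect $m$-th power.
\item You also claim that Algorithm~\ref{F2} halts at the first index with $x_i \leq Y$. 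To justify this, add the one-line check that $x_i \leq Y$ gives $f(x_i) \geq x_i$, since $f(x) - x = (y - x^m)/(m x^{m-1})$. Hence $x_{i+1} = \floor{f(x_i)} \geq x_i$ because $x_i$ is an integer.
\end{itemize}
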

\begin{proof}
The first part is Lemma~\ref{lemn1}.
If $x_i \geq  y^{\frac{1}{m}} $ and it cannot be
 $x_i  >    y^{\frac{1}{m}} $ the only possibility
left is
 $x_i  =    y^{\frac{1}{m}} $ rather than the
stated
 $x_i  \leq y^{\frac{1}{m}} $.
Since $x_i$ is an integer it follows that
$x_i = y^{\frac{1}{m}} =\floor{y^{\frac{1}{m}}}$.
\end{proof}
$ $ \\ $ $
Consider the sequence of values $x_0 , x_1 , x_2 , \ldots$
obtained by applying the
Newton-Raphson formula Eq.(\ref{npp6ab}).
We distinguish three cases.
$ $ \\ $ $
{\bf Case 1.} If $x_{i+1} < x_i$ we continue.
$ $ \\ $ $
{\bf Case 2.}
If $x_{i+1}  \geq   x_i$, we stop.
$ $ \\ $ $
By Lemma~\ref{lemn1} we have $x_i \geq \floor{y^{\frac{1}{m}}}$.
By the stopping condition we have $x_{i+1} \geq x_i$ and by
combining the two we obtain the following.
\[
x_{i+1} \geq   x_i \geq \floor{y^{\frac{1}{m}}}.
\]
There are two possibilities:
(a) $ x_i \leq y^{\frac{1}{m}}$
or
(b) $x_i >y^{\frac{1}{m}}$.
Case (b) is impossible because it contradicts Lemma~\ref{lemn2}
that states it should be then $x_{i+1}<x_{i}$.
Case (a) is the only case possible.
We obtain
\[
y^{\frac{1}{m}} \geq x_i \geq \floor{y^{\frac{1}{m}}},
\]
utilizing Lemma~\ref{lemn1} . This implies
$x_i = y^{\frac{1}{m}} = \floor{y^{\frac{1}{m}}}$.
$ $ \\ $ $
{\bf Case 3.} We stop when $x_i = y^{1/m}$ or when $y^{1/m} > x_i$.
We test equivalently $x_i^m = y$ or $y > x_i^m$.
$ $ \\ $ $
By Eq.(\ref{npp5a}), we have
$x_{i+1} = \floor{f(x_i )}$.
We then claim that $f(x_i ) \geq x_i$. Suppose
this is not the case and we have instead
$f(x_i ) < x_i$. By Eq.(\ref{npp5a}),
$x_{i+1} = \floor{f(x_i )} \leq f(x_i ) < x_i$.
Then we have case 1 and for case 1 we continue iterating
than stopping. Therefore we must have $f(x_i ) \geq x_i$.
Then we obtain the following.
\begin{eqnarray}
 f(x_i ) & \geq & x_i
                  \Leftrightarrow \nonumber \\
     \left( (m-1) x_i +   \frac{y}{x_i^{m-1}}
     \right) \cdot  \frac{1}{m}
 &\geq& x_i       \Leftrightarrow \nonumber \\
    x_i^m &\leq& y^{\frac{1}{m}}
                  \Leftrightarrow \nonumber \\
\label{npp7}
    x_i &\leq& y^{\frac{1}{m}}.
\end{eqnarray}
From Eq.(\ref{npp7}) and combining with Lemma~\ref{lemn1} we have
the following.
\[
\floor{ y^{\frac{1}{m}}}+ 1
> y^{\frac{1}{m}} \geq x_i \geq \floor{ y^{\frac{1}{m}}} ,
\]
where the third
inequality is by way of Lemma~\ref{lemn1}, and
the second by the derivation above.
This implies $x_i = \floor{ y^{\frac{1}{m}}}$.
\[
\floor{ y^{\frac{1}{m}}}+ 1
> y^{\frac{1}{m}} \geq x_i \geq \floor{ y^{\frac{1}{m}}} ,
\]
that leads to $x_i = \floor{ y^{\frac{1}{m}}}$.
\end{proof}
$ $ \\  $ $
\SetKwComment{Comment}{/* }{ */}
\SetKwRepeat{Do}{do}{while}
\begin{algorithm}[H]
\KwIn{Natural number $y$ greater than 2, natural number $m >1$}
\KwOut{Integer $x$ such that $\lfloor x^{1/m} \rfloor = y$}

 $i=0$;
 $x_0 = Init2(y,m)$  
\Comment*[r]{Init2(y,m) Per Eq.(\ref{npp5}))}

\While{True}{

 $x_{i+1} =
  \left\lfloor
     \left( (m-1) x_i +   \left\lfloor
                          \frac{y}{x_i^{m-1}}
                          \right\rfloor
     \right) \cdot  \frac{1}{m}
  \right\rfloor  $ ;

 \If(\tcc*[f]{Case 1}){$x_{i+1} < x_{i}$}{
    $i=i+1;$

    continue \Comment*[r]{Move to the next iteration}
 }
 \If(\tcc*[f]{Case 2}){$x_{i+1} \geq x_{i}$}{
  \If{$\mathbf{CheckCandidateSolution} ( x_{i},y,m)   = \mathbf{YES}$}{
     \Return{$x_{i}$};
  }
 }

 \If(\tcc*[f]{Case 3}){$x_i^m = y$ \textbf{or} $y> x_i^m$}{
   \If{$\mathbf{CheckCandidateSolution} ( x_{i},y,m)   = \mathbf{YES}$}{
      \Return{$x_{i}$};
   }

 }
}
 \caption{FindFloor2(y,m) : Find $\lfloor y^{1/m} \rfloor$}
 \label{F2}
\end{algorithm}

\SetKwComment{Comment}{/* }{ */}
\SetKwRepeat{Do}{do}{while}
\begin{algorithm}[H]
\KwIn{Natural numbers $x,y,m $  greater than one}
\KwOut{YES if $\lfloor y^{1/m} \rfloor = x$; NO otherwise}

 \eIf{$x^m \leq y$ and $ y < (x+1)^m$}{
       \Return{$\mathbf{YES}$}
 }{
       \Return{$\mathbf{NO}$}
 }

 \caption{CheckCandidateSolution(x,y,m) : Is $x= \lfloor y^{1/m} \rfloor$?}
 \label{CF}
\end{algorithm}

$ $ \\ $ $
{\bf Example 5.}
For the simple case $y=10^8$ and $m=8$,
the sequence generated is $16,14,12,10,10$ and
$x=10$ is reported.

$ $ \\ $ $
{\bf Example 6.}
For the case $y=96889010407$ and $m=13$,
the sequence generated is $8,7,7$ 
and $x=7$ is reported.

$ $ \\ $ $
{\bf Example 7.}
For the case $y=52523350144$ and $m=7$,
the sequence generated is $64, 54, 46, 40, 36$,
$34, 34$
and $x=34$ is reported.

$ $ \\ $ $
{\bf Example 8.}
For the case $y=52523350141$ and $m=7$,
the sequence generated is $64, 54, 46, 40$, 
$36, 34, 33, 34$
and $x=33$ is reported.

\section{Conclusion}

If one compares the  performance of the two
algorithms through the series of examples indicated,
they show that Algorithm~\ref{F1} is
slightly better than Algorithm~\ref{F2} for the last two
examples. The choice of initial value $x_0$ that is more
targeted in Algorithm~\ref{F1} seems to make the difference.



\end{document}